\newtheorem*{theorema}{Main Theorem}
\newtheorem{theorem}{Theorem}
\newtheorem{prop}{Proposition}[section]
\newtheorem{lemma}[prop]{Lemma}
\theoremstyle{definition}
\theoremstyle{remark}
\numberwithin{equation}{section}
\begin{document}

\author{Hiroki Takahasi}

\address{Keio Institute of Pure and Applied Sciences (KiPAS), Department of Mathematics,
Keio University, Yokohama,
223-8522, JAPAN} 
\email{hiroki@math.keio.ac.jp}
\urladdr{\texttt{http://www.math.keio.ac.jp/~hiroki/}}

\subjclass[2010]{Primary 11A55, 11K50, 37A40, 60F10; Secondary 37A45, 37A50}
\thanks{{\it Keywords}: Diophantine approximation, continued fraction, large deviations.}


\title[Large Deviations for
 denominators of continued fractions]
 {Large deviations for\\
 denominators of continued fractions} 
 \maketitle
 
 \begin{abstract}
 We give an exponential upper bound on the probability
 with which the denominator of the $n$th convergent in the regular continued fraction expansion
 stays away from the mean $\frac{n\pi^2}{12\log2}$.
  The exponential rate is best possible, given by an analytic function
 related to the dimension spectrum of Lyapunov exponents for the Gauss transformation.
 \end{abstract}


    
    \section{Introduction}
    Each irrational number $x\in(0,1)$ has the continued fraction expansion
    \begin{equation*}\label{expansion}x=\cfrac{1}{a_1+\cfrac{1}{a_2+\cdots}},\end{equation*}
where each $a_i$ is a positive integer.
Let $p_n$, $q_n$ be relatively prime positive integers satisfying
$$\frac{p_n}{q_n}=\cfrac{1}{a_1+\cfrac{1}{a_2+\cdots+\cfrac{1}{a_n}}}.$$
Then $p_n/q_n$ converges to $x$ as $n\to\infty$, and 
the rate of this convergence is determined by
the growth rate of the denominator $q_n$:
\begin{equation}\label{double}
\frac{1}{2q_{n+1}^2}\leq\left|x-\frac{p_n}{q_n}\right|\leq\frac{1}{q_n^2}.\end{equation}
One important problem in 
the metric theory of continued fractions 
is to investigate
the limit behavior of $q_n$
for typical irrationals.
It was Khinchin \cite{Khi35} who proved 
the existence of an absolute constant $\gamma$
such that $(1/n)\log q_n\to\gamma$ as $n\to\infty$ Lebesgue-a.e.
L\'evy \cite{Lev37} showed $\gamma=\frac{\pi^2}{12\log2}$.
Hence,
 for any closed interval $K$ not containing $\gamma$,
 the Lebesgue measure of the event
$\{\log q_n-\gamma n\in K\}$ converges to $0$ as $n\to\infty$.
Of interest to know is the rate of this convergence.
If it is exponential, namely there is an upper bound of the form $Ce^{-\delta n}$ for some $C>0$ and $\delta>0$,
then the smallest such $\delta$ would have some intrinsic meaning.

Some rates in this convergence are available from central limit theorems.
Denote by $\lambda$ the Lebesgue measure restricted to $(0,1)$.
Misevi\u{c}jus \cite{Mis81} showed that
$$\sup_{\alpha\in\mathbb R}\left|\lambda\left\{\frac{\log q_n-\gamma n}{\sigma\sqrt{n}}\leq\alpha\right\}
-\int_{-\infty}^\alpha e^{-\frac{x^2}{2}}dx
\right|
=O\left(\frac{\log n}{\sqrt{n}}\right),$$
where $\sigma>0$.
The results of Morita \cite{Mor94} and Vall\'ee \cite{Val97} improve
the order to $O(1/\sqrt{n})$.
It follows that for every $\alpha<2\gamma$,
$$\lambda\left\{\frac{2}{n}\log q_n\leq\alpha\right\}=O\left(\frac{1}{\sqrt{n}}\right),$$
This estimate is far from optimal.
From the result of Ara\'ujo and Bufetov \cite[Theorem B]{AraBuf}, 
$$\limsup_{n\to\infty}\frac{1}{n}\log\lambda\left\{\frac{2}{n}\log q_n\leq\alpha\right\}\leq I(\alpha),$$
where the number $I(\alpha)>0$ is defined below. Hence, the convergence takes place at an exponential 
rate, and the rate can be chosen arbitrarily close to $I(\alpha)$.
The aim of this paper is to show that $I(\alpha)$ is the best exponential rate.

We now define $I(\alpha)$ and state our main result.
For each $\alpha\in[0,\infty]$ define
$$L(\alpha)=\left\{x\in (0,1)\colon\liminf_{n\to\infty}\frac{2}{n}\log q_n(x)
=\limsup_{n\to\infty}\frac{2}{n}\log q_n(x)
=\alpha\right\}.$$
Put $b(\alpha)=\dim_H L(\alpha),$
where $\dim_H$ denotes the Hausdorff dimension.
Define $$I(\alpha)=\alpha(1-b(\alpha)).$$
Put $\alpha_{\rm min}=\log\frac{\sqrt{5}+1}{2}$.

\begin{theorema}
The following holds:
\medskip

\noindent {$\bullet$} for every $n\geq1$ and every $\alpha\in(2\gamma+\frac{16}{n},\infty)$,
$$\lambda\left\{\frac{2}{n}\log q_n
 \geq\alpha \right\}\leq C_\alpha e^{-I(\alpha)n};$$
\noindent {$\bullet$} for every $n\geq1$ with
$\alpha_{\rm min}< 2\gamma-\frac{16}{n}$ and every $\alpha\in(\alpha_{\rm min},2\gamma-\frac{16}{n})$ ,
$$\lambda\left\{ \frac{2}{n}\log q_n\leq \alpha \right\}
\leq C_\alpha e^{-I(\alpha)n},$$
where $C_\alpha:=e^{16(|I'(\alpha)|+1)}$.
\end{theorema}

The Main Theorem follows from a combination of 
 the multifractal analysis \cite{KesStr07,PolWei99} and the thermodynamic formalism \cite{Bow75,Rue78}
 associated with
the Gauss transformation
      $T\colon (0,1]\to[0,1)$ given by $Tx=1/x-\lfloor1/x\rfloor$.
        It is well-known (see e.g., \cite{Krai} or Lemmas \ref{length} and \ref{distortion}) that there exists a constant $C>1$ such that
for any irrational $x\in(0,1)$ and $n\geq1$,
\begin{equation*}\label{relation}C^{-1}q_n^2(x)\leq |DT^n(x)|\leq Cq_n^2(x).\end{equation*} 
These double inequalities permit to translate the analysis of $\log q_n$
to that of the Birkhoff sum of the function $\log|DT|$ under the iteration of $T$.
The $L(\alpha)$ is the set of irrationals in $(0,1)$ for which the Lyapunov exponent for $T$ is equal to $\alpha$. 
Then $L(\alpha)\neq\emptyset$ holds if and only if $\alpha\in\left[\alpha_{\rm min},\infty\right]$,
 see \cite{KesStr07,PolWei99}.
The function $\alpha\in     \left[\alpha_{\rm min},\infty\right)         \mapsto b(\alpha)$ is
known as {\it the dimension spectrum of Lyapunov exponents}.
It is a non-convex function, analytic on $\left(\alpha_{\rm min},\infty\right)$ \cite[Theorem 1.3]{KesStr07}, and 
$b(\alpha)=1$ holds if and only if $\alpha=2\gamma$.
Hence,  $\alpha\in     \left(\alpha_{\rm min},\infty\right)         \mapsto I(\alpha)$ is 
analytic and $I(\alpha)=0$ holds if and only if $\alpha=2\gamma$. 

The graph of the function  $\alpha\in[\alpha_{\rm min},\infty)\mapsto I(\alpha)$ is shown in FIGURE 1.
Since $b(\alpha_{\rm min})=0$ by \cite[Theorem 1.3]{KesStr07}, $I(\alpha_{\rm min})=\alpha_{\rm min}$ holds.
Since $I$ is convex by Lemma \ref{ratefunction}, $I'(\alpha)$ increases for $\alpha>2\gamma$.
Since
$b(\alpha)\to1/2$ as $\alpha\to\infty$ by \cite[Theorem 1.3]{KesStr07}, the asymptote exists with slope $1/2$.

\begin{figure}
\begin{center}
\includegraphics[height=4cm,width=7.5cm]{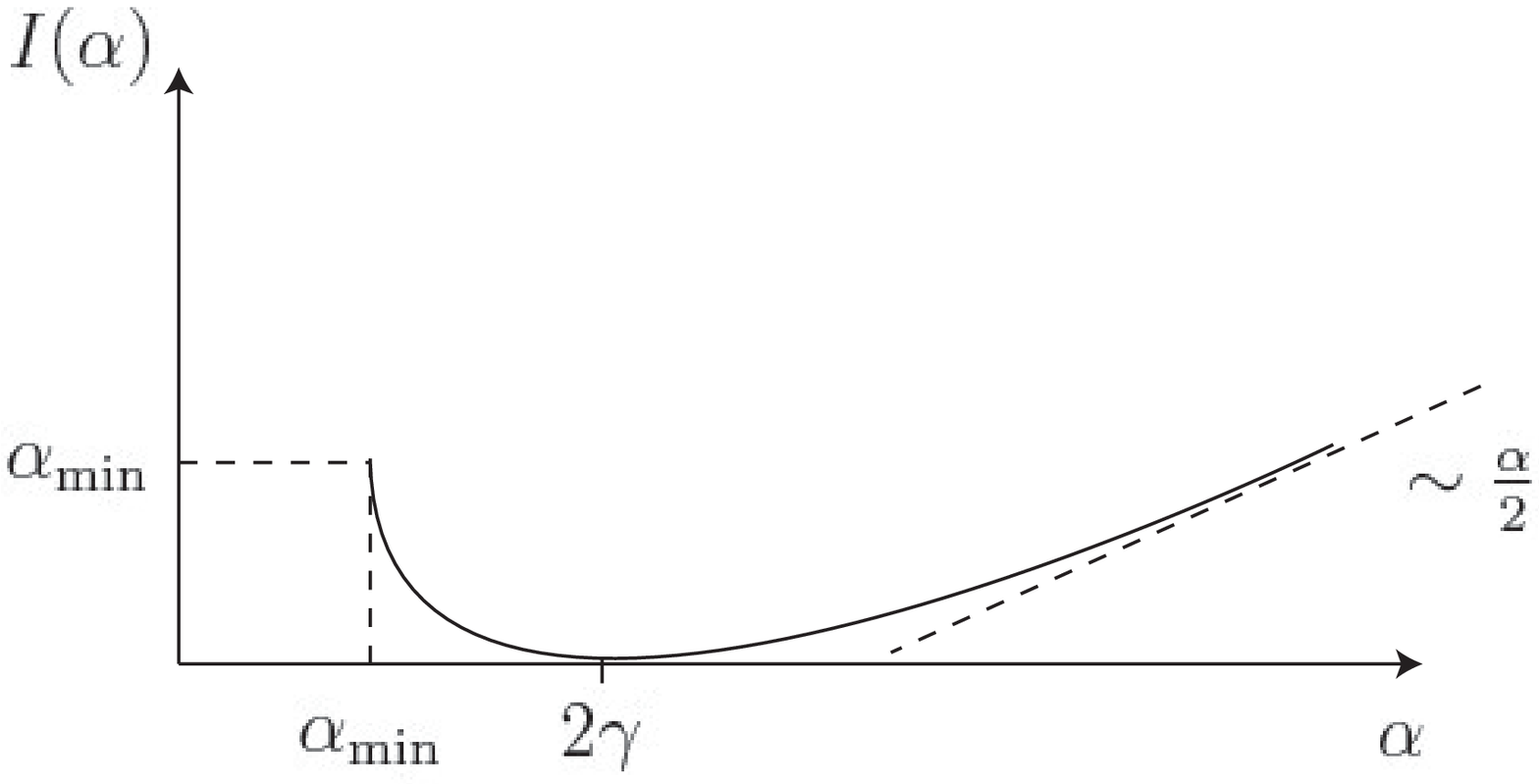}
\caption{The graph of the function $\alpha\in[\alpha_{\rm min},\infty)\mapsto I(\alpha)$: $I(\alpha_{\rm min})=\alpha_{\rm min}$, 
$\displaystyle{\lim_{\alpha\to\alpha_{\rm min}+0}}$$I'(\alpha)=-\infty$,  $\displaystyle{\lim_{\alpha\to\infty}}$$I'(\alpha)=1/2$.}
\end{center}
\end{figure}

Since $T$ has infinitely many branches and $\log|DT|$ is unbounded, some finite approximations
are necessary for a proof of the Main Theorem. We take finite subsystems, and estimate the exponent of 
the deviation probabilities 
 in terms of entropy and Lyapunov exponents of invariant probability measures of $T$
 supported on the subsystems (Lemma \ref{horse}).
Then, using the variational formula for the dimension spectrum 
 \cite{PolWei99} 
we relate
the exponent to the function $I$.
At the very end we use
the convexity and the smoothness of $I$ (in fact, $C^2$ is sufficient)
to bound error terms arising from the nonlinearity of $T$ and deduce the desired upper bounds.

From \cite[Theorem B]{AraBuf} the following asymptotic lower bounds hold:
\begin{itemize}
\item  for every $\alpha\in(2\gamma,\infty)$,
\begin{align*}
\liminf_{n\to\infty}&\frac{1}{n}\log \lambda\left\{\frac{2}{n}\log q_n\geq \alpha \right\}
\geq-I(\alpha);
\end{align*}
\item for every $\alpha\in(\alpha_{\rm min},  2\gamma)$,
\begin{align*}
\liminf_{n\to\infty}&\frac{1}{n}\log \lambda\left\{\frac{2}{n}\log q_n\leq \alpha \right\}
\geq-I(\alpha).
\end{align*}
\end{itemize}
This means that the exponent $I(\alpha)$ in the Main Theorem is the best possible one.
However,  the result below on sample means of
independent and identically distributed (i.i.d.) random variables 
leaves the possibility that the upper bounds in the Main Theorem
can be improved.
\begin{theorem}\cite[Theorem 1]{BahRan60}\label{th}
Let $(X_n)_{n\geq1}$ be a sequence of i.i.d. random variables with positive variance with mean $0$.
Assume the moment generating function $c(t)=\log E(e^{tX_1})$ is finite
on some interval $U$.
Let $\alpha>0$ and $t_\alpha\in U$ be such that $J(\alpha):=
\sup_{t\in U}e^{t\alpha-c(t)}=e^{t_\alpha\alpha-c(t_\alpha)}$.
Then for every $\alpha>0$,
$$P(S_n\geq\alpha n)=\frac{b_n(1+o(1))}{\sqrt{2\pi n}}e^{-J(\alpha)n},$$
where $S_n=X_1+\cdots+X_n$, $(b_n)_{n\geq1}$ is a sequence of constants 
and $\inf_n b_n>0$, $\sup_nb_n<\infty$.
\end{theorem}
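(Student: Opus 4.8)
The plan is the classical Cram\'er exponential-tilting argument, refined by a local central limit theorem to produce the polynomial factor $n^{-1/2}$ and the bounded prefactor $(b_n)$. For $t\in U$ let $\tilde P_t$ be the probability under which $X_1,X_2,\dots$ remain i.i.d.\ but with marginal law tilted by the weight $e^{tx-c(t)}$ relative to that of $X_1$; then $\tilde E_t[X_1]=c'(t)$ and $\mathrm{Var}_{\tilde P_t}(X_1)=c''(t)$. Since $c'(0)=E[X_1]=0<\alpha$ and $c$ is strictly convex ($X_1$ has positive variance, hence so does each tilt), the parameter $t_\alpha$ in the statement satisfies $t_\alpha>0$ and $c'(t_\alpha)=\alpha$, the supremum defining $J(\alpha)$ equals $t_\alpha\alpha-c(t_\alpha)$, and $\sigma_\alpha^2:=c''(t_\alpha)\in(0,\infty)$ (using that $t_\alpha$ lies in the interior of the domain of finiteness of $c$). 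Write $\bar S_n:=S_n-\alpha n$.

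First I would change measure. On the $\sigma$-field generated by $X_1,\dots,X_n$ one has $dP=e^{-t_\alpha S_n+nc(t_\alpha)}\,d\tilde P_{t_\alpha}$, hence
\begin{align*}
P(S_n\ge\alpha n)&=\tilde E_{t_\alpha}\!\left[e^{-t_\alpha S_n+nc(t_\alpha)}\mathbf 1_{\{\bar S_n\ge0\}}\right]\\
&=e^{-nJ(\alpha)}\,\tilde E_{t_\alpha}\!\left[e^{-t_\alpha\bar S_n}\mathbf 1_{\{\bar S_n\ge0\}}\right].
\end{align*}
Under $\tilde P_{t_\alpha}$ the sum $\bar S_n$ is now centred with variance $\sigma_\alpha^2 n$, so the event $\{\bar S_n\ge0\}$ has probability bounded away from $0$; everything reduces to showing
\begin{align*}
\tilde E_{t_\alpha}\!\left[e^{-t_\alpha\bar S_n}\mathbf 1_{\{\bar S_n\ge0\}}\right]=\frac{b_n(1+o(1))}{\sqrt{2\pi n}},\qquad 0<\inf_n b_n\le\sup_n b_n<\infty,
\end{align*}
after which multiplying by $e^{-nJ(\alpha)}$ gives the claim.

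To evaluate this expectation I would invoke a local limit theorem for $\bar S_n$ under $\tilde P_{t_\alpha}$, distinguishing the non-lattice and lattice cases. In the non-lattice case the density $f_n$ of $\bar S_n$ satisfies $f_n(y)=\tfrac{1}{\sigma_\alpha\sqrt{2\pi n}}(1+o(1))$ uniformly for $y$ in compact sets; writing the expectation as $\int_0^\infty e^{-t_\alpha y}f_n(y)\,dy$ and cutting at a large constant $K$, the part over $\{y>K\}$ is negligible uniformly in $n$ because $t_\alpha>0$ and $\sup_y f_n(y)=O(n^{-1/2})$, while the part over $\{0\le y\le K\}$ equals $\tfrac{1}{\sigma_\alpha\sqrt{2\pi n}}(1+o(1))\int_0^K e^{-t_\alpha y}\,dy$; letting $K\to\infty$ yields $\tfrac{1}{t_\alpha\sigma_\alpha\sqrt{2\pi n}}(1+o(1))$, so $b_n\to 1/(t_\alpha\sigma_\alpha)$. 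In the lattice case, say $X_1\in v+h\mathbb Z$ a.s.\ with maximal span $h$, the lattice local limit theorem gives $\tilde P_{t_\alpha}(\bar S_n=y)=\tfrac{h}{\sigma_\alpha\sqrt{2\pi n}}(1+o(1))$ for $y$ near $0$ in the coset carrying $\bar S_n$, and $\sum_{y\ge0}e^{-t_\alpha y}$ over that coset is a convergent geometric series whose value depends only on the position modulo $h$ of its least nonnegative point; this position moves (quasi-periodically) with $n$, which is exactly why one obtains a bounded oscillating sequence $b_n$ rather than a single constant.

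The main obstacle is the local limit theorem step: one must control the density, or lattice mass, of $\bar S_n$ near the origin with a quantitative error, which requires an Esseen-type expansion of the characteristic function of the tilted $X_1$ near $0$ together with a uniform bound on it away from $0$ (and, in the lattice case, away from the dual lattice), and then the two cases must be carried through separately — the lattice case being responsible for the sequence $(b_n)$ in place of an honest limit.
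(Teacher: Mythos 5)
A point of comparison first: the paper does not prove this statement at all --- it is quoted as Theorem 1 of Bahadur and Ranga Rao \cite{BahRan60} and used only in the discussion of whether the Main Theorem's upper bound could be sharpened --- so your proposal can only be judged against the classical argument, not against anything in the text. Your route (tilt at $t_\alpha$ with $c'(t_\alpha)=\alpha$, factor out $e^{-J(\alpha)n}$, then evaluate $\tilde E_{t_\alpha}\big[e^{-t_\alpha\bar S_n}\mathbf 1_{\{\bar S_n\ge0\}}\big]$ by a refined central limit analysis, treating lattice and non-lattice cases separately) is precisely the Bahadur--Rao argument, and the structure is sound, including the correct explanation of why the lattice case forces a bounded oscillating sequence $b_n$ rather than a single limit.

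The one genuine gap is in the non-lattice case: you assume $\bar S_n$ has a density $f_n$ under $\tilde P_{t_\alpha}$ and invoke a density local limit theorem. A non-lattice law need not be absolutely continuous (it may be discrete, e.g.\ supported on $\{0,1,\sqrt2\}$, or singular), and even when a density exists the density-form LLT needs an integrability condition on the characteristic function; so this step fails as written under the stated hypotheses. The standard repair --- and what \cite{BahRan60} actually does --- is to avoid densities: write the expectation as $\int_0^\infty t_\alpha e^{-t_\alpha y}\,\tilde P_{t_\alpha}(0\le\bar S_n\le y)\,dy$ by Fubini, and apply Esseen's first-order Edgeworth expansion for the \emph{distribution function} of the normalized sum, which holds with error $o(n^{-1/2})$ for any non-lattice law with finite third moment (automatic here, since $c$ is finite near $t_\alpha$, so the tilted law has exponential moments); the lattice counterpart of that expansion handles the other case and produces your geometric-series computation. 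With that substitution your argument goes through and gives $b_n\to1/(t_\alpha\sigma_\alpha)$ in the non-lattice case. A minor further point: $c'(t_\alpha)=\alpha$ and $\sigma_\alpha^2=c''(t_\alpha)\in(0,\infty)$ require $t_\alpha$ to be interior to the domain of finiteness of $c$; you note this parenthetically, but it is really a hypothesis to be imposed (as in \cite{BahRan60}) rather than a consequence of the statement as quoted.
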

In \cite{BahRan60} it was shown that $\frac{b_n(1+o(1))}{\sqrt{2\pi n}}\leq1$, and so
$P(S_n\geq\alpha n)\leq e^{-J(\alpha)n}$ holds.
Such an exponential upper bound was obtained in \cite{Che52}, and follows from Cram\'er's theorem on the LDP, see 
\cite[pp.26-27]{RasSep}. 
In the non-i.i.d. case, results for uniformly hyperbolic systems on compact metric spaces
with H\"older continuous functions are available \cite[Lemma A.1]{ChaCol05}, \cite[Theorem 1]{Wad96}, which
provide 
upper and lower bounds in agreement with the i.i.d. case in Theorem \ref{th}.
The bounds in   \cite[Lemma A.1]{ChaCol05} are valid only for those $\alpha$
close to the mean. 

\section{Preliminary lemmas}
Before entering the proof of the Main Theorem we need some preliminary lemmas.
For each integer $n\geq1$
denote by $\mathscr{A}^n$ the collection of maximal open intervals on which $T^n$ is well-defined
and continuous. 
Notice that $q_n$ is constant on each element $A\in\mathscr{A}^n$.
This constant value is denoted by $q_n(A)$.
For a finite set $\mathscr{B}$ of $\mathscr{A}^n$ denote by $[\mathscr{B}]$ the union of all its elements.

\begin{lemma}\label{length}
For every integer $n\geq1$ and every $A\in \mathscr{A}^n$,
\begin{equation*}
\frac{1}{2}\leq
\frac{\lambda(A)}{q_n(A)^{-2}}< 1.
\end{equation*}
\end{lemma}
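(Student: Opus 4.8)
The plan is to identify each $A\in\mathscr{A}^n$ with a rank-$n$ cylinder of the continued fraction expansion, to compute $\lambda(A)$ exactly in terms of $q_n$ and $q_{n-1}$, and then to read off both inequalities from the elementary bound $1\le q_{n-1}\le q_n$.

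First I would fix the string $(a_1,\dots,a_n)$ of positive integers for which $A$ is the set of irrationals whose first $n$ partial quotients are $a_1,\dots,a_n$, and recall the standard recursions $p_j=a_jp_{j-1}+p_{j-2}$, $q_j=a_jq_{j-1}+q_{j-2}$ with $p_{-1}=q_0=1$, $p_0=q_{-1}=0$, together with the determinant identity $p_jq_{j-1}-p_{j-1}q_j=(-1)^{j-1}$. The inverse of the branch of $T^n$ on $A$ is the M\"obius map $y\mapsto(p_n+p_{n-1}y)/(q_n+q_{n-1}y)$, which carries $[0,1]$ homeomorphically onto $\overline{A}$; hence the two endpoints of $A$ are the images of $0$ and of $1$, namely $p_n/q_n$ and $(p_n+p_{n-1})/(q_n+q_{n-1})$. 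Using the determinant identity,
$$\lambda(A)=\left|\frac{p_n}{q_n}-\frac{p_n+p_{n-1}}{q_n+q_{n-1}}\right|=\frac{|p_nq_{n-1}-p_{n-1}q_n|}{q_n(q_n+q_{n-1})}=\frac{1}{q_n(q_n+q_{n-1})}.$$

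Since $q_n(A)=q_n$ by construction, this gives $\lambda(A)/q_n(A)^{-2}=q_n/(q_n+q_{n-1})$. From $q_{-1}=0$, $q_0=1$ and $a_j\ge1$ an immediate induction shows $1\le q_{n-1}\le q_n$ for every $n\ge1$, whence $q_n<q_n+q_{n-1}\le 2q_n$ and therefore $\tfrac12\le q_n/(q_n+q_{n-1})<1$, which is the assertion. There is essentially no obstacle here; the only points requiring a moment's care are the identification of the inverse branch of $T^n$ on $A$ (equivalently, the location of the endpoints of $A$) and the base case $n=1$ of the induction, where $q_0=q_1$ exactly when $a_1=1$ and the left-hand inequality becomes an equality — confirming that the constant $1/2$ is sharp, in agreement with the non-strict inequality in the statement.
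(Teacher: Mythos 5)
Your proof is correct and follows essentially the same route as the paper: both identify the endpoints of the cylinder $A$ as $p_n/q_n$ and $(p_n+p_{n-1})/(q_n+q_{n-1})$, deduce $\lambda(A)=\frac{1}{q_n(q_n+q_{n-1})}$, and conclude from $q_{n-1}\leq q_n$. The only cosmetic difference is that you treat all $n\geq1$ uniformly via the inverse M\"obius branch (correctly noting equality at $a_1=1$), whereas the paper checks $n=1$ separately and uses the strict inequality $q_n>q_{n-1}$ for $n\geq2$.
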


\begin{proof}
Assume $n=1$. Each $A\in\mathscr{A}^1$ has the form
$A=(1/(k+1),1/k)$ for some $k\geq1$.
Then $q_1(A)=k$ and so the double inequalities hold.
Assume $n\geq2$ and let $A\in\mathscr{A}^n$.
For each $i=1,\ldots,n$, $p_i$, $q_i$ are constant on $A$. Denote these constant values by $p_i(A)$ and $q_i(A)$.
By \cite[p.18]{Krai}, the endpoints of the interval $A$ are
$p_n(A)/q_n(A)$ and $(p_n(A)+p_{n-1}(A))/(q_n(A)+q_{n-1}(A))$.
As a consequence, 
$$\lambda(A)=\frac{1}{q_n(A)(q_n(A)+q_{n-1}(A))}.$$
Since $q_n(A)>q_{n-1}(A)$ we obtain the desired double inequalities.
\end{proof}

The next lemma used to control the nonlinearity of $T$ 
can be proved by elementary calculations and hence omitted. See e.g., \cite[p.253 Claim]{FieFieYur02}
for details.
\begin{lemma}\label{distortion}
For every integer $n\geq1$ and every $A\in \mathscr{A}^n$,
$$\sup_{x,y\in A}\frac{DT^n(x)}{DT^n(y)}\leq e^{16}.$$
\end{lemma}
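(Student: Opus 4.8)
The plan is to pass to inverse branches, where the geometry of the Gauss map becomes explicitly Möbius, and then bound a telescoping sum using the exponential growth of the continued-fraction denominators. Fix $A\in\mathscr A^n$ and write $(a_1,\dots,a_n)$ for the common value on $A$ of the partial quotients. Then $T^n|_A$ is a diffeomorphism onto $(0,1)$ with inverse $\psi:=\psi_{a_1}\circ\cdots\circ\psi_{a_n}$, where $\psi_k(u)=1/(k+u)$ is the inverse of the $k$-th branch of $T$. Since $\psi(T^nz)=z$ for $z\in A$ and $DT^n$ has constant sign $(-1)^n$ on the interval $A$,
\[
\frac{DT^n(x)}{DT^n(y)}=\frac{|\psi'(T^ny)|}{|\psi'(T^nx)|}>0,
\]
so it suffices to prove $\bigl|\log|\psi'(u)|-\log|\psi'(v)|\bigr|\le 16$ for all $u,v\in(0,1)$.

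Two elementary facts drive the estimate. First, $\psi_{a_1}\circ\cdots\circ\psi_{a_m}$ is the Möbius map $u\mapsto(p_m+up_{m-1})/(q_m+uq_{m-1})$, so its derivative has absolute value $(q_m+uq_{m-1})^{-2}\le q_m^{-2}$; combined with $q_m\ge q_{m-1}+q_{m-2}$, which forces $q_m\ge((1+\sqrt5)/2)^{m-1}$, this shows that compositions of inverse branches contract exponentially, uniformly in the partial quotients. Second, $\log|\psi_k'(s)|=-2\log(k+s)$ is $2$-Lipschitz on $s\in[0,1]$ because $k\ge1$. Now apply the chain rule with $\sigma_j:=\psi_{a_{j+1}}\circ\cdots\circ\psi_{a_n}$ (and $\sigma_n=\mathrm{id}$):
\[
\log|\psi'(u)|-\log|\psi'(v)|=\sum_{j=1}^{n}\Bigl(\log|\psi_{a_j}'(\sigma_j(u))|-\log|\psi_{a_j}'(\sigma_j(v))|\Bigr).
\]
By the Lipschitz bound the absolute value of the right side is at most $2\sum_{j=1}^n|\sigma_j(u)-\sigma_j(v)|$; by the first fact $|\sigma_j(u)-\sigma_j(v)|\le\sup_{[0,1]}|\sigma_j'|\le((1+\sqrt5)/2)^{-2(n-j-1)}$ for $1\le j\le n-1$, while the one remaining term ($j=n$) is $\le|u-v|\le1$. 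Summing the geometric series gives $\bigl|\log|\psi'(u)|-\log|\psi'(v)|\bigr|\le 2\bigl(1+(1+\sqrt5)/2\bigr)=3+\sqrt5<16$, and exponentiating yields the lemma.

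I do not expect a genuine obstacle here: the argument is routine, and the one idea that makes it work—rather than merely giving a bound of order $n$—is that although the branch-wise logarithmic derivative has a large Lipschitz constant on a narrow branch, that branch is correspondingly short, which in the inverse-branch picture is exactly the uniform bound $|\psi_{a_1\cdots a_m}'|\le q_m^{-2}$. The only point needing a little care is the bookkeeping of the crude constants so that the final sum stays below $16$; treating the top-level term ($j=n$) separately from the geometric tail suffices. (Alternatively one may argue directly on the branches of $T$: on $A_k=(1/(k+1),1/k)$ one has $\bigl|\log|DT(z)|-\log|DT(z')|\bigr|=2|\log(z/z')|\le2(k+1)|z-z'|$, and for $x,y\in A$ the points $T^jx,T^jy$ lie in a common element of $\mathscr A^{n-j}$ whose length decays geometrically in $n-j$ by Lemma \ref{length} together with the growth of $q_m$; the factors $(k+1)$ and the branch length cancel as above.)
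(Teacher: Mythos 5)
Your proof is correct. Note that the paper does not actually prove this lemma: it declares the estimate elementary and refers to \cite[p.253 Claim]{FieFieYur02}, so there is no in-paper argument to match. Your self-contained route---passing to the inverse branch $\psi=\psi_{a_1}\circ\cdots\circ\psi_{a_n}$, telescoping $\log|\psi'|$ over the single branches, using the $2$-Lipschitz bound for $\log|\psi_k'|$ together with the M\"obius formula $|(\psi_{a_{j+1}}\circ\cdots\circ\psi_{a_n})'|\le \tilde q_{n-j}^{-2}\le\left(\frac{1+\sqrt5}{2}\right)^{-2(n-j-1)}$---is the standard bounded-distortion computation for the Gauss map, and every step checks out (the sign remark, the continuant recursion $q_m\ge q_{m-1}+q_{m-2}$ forcing Fibonacci growth, and the geometric series summing to $\phi$ via $\phi^2=\phi+1$). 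In fact your bookkeeping yields the sharper constant $e^{3+\sqrt5}\approx e^{5.24}$, comfortably below the $e^{16}$ the paper imports from the reference, so your argument proves a slightly stronger statement than the one cited; the only cost is that the explicit constant $16$ appearing throughout the paper (in $C_\alpha$ and in the ranges of $\alpha$) is tied to the cited claim, so one keeps $e^{16}$ in the lemma statement even though your estimate would permit a smaller constant.
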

 
Write $\phi=-\log|DT|$ and denote by
$\mathcal M_\phi(T)$ the set of $T$-invariant Borel probability measures on $(0,1)$
for which $\phi$ is integrable.
 For each $\mu\in\mathcal M_\phi(T)$
 denote by $h(\mu)$ the Kolmogorov-Sina{\u\i} entropy of $\mu$ with respect to $T$,
 and define $\chi(\mu)=-\int\phi d\mu$.
 Put $F(\mu)=h(\mu)-\chi(\mu)$.
 It is known  \cite{Wal78} that $\chi(\mu)\geq\alpha_{\rm min}$ and $F(\mu)\leq0$.





\begin{lemma}\label{ratefunction}
For every $\alpha\in\left[\alpha_{\rm min},\infty\right)$,
$$I(\alpha)=\inf\{-F(\mu)\colon\mu\in\mathcal M_\phi(T), \ \chi(\mu)=\alpha\}.$$
In particular, $I$ is convex.
\end{lemma}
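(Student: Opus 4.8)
The plan is to reduce the lemma to the conditional variational principle of multifractal analysis and then read off convexity. The single external input I would use is the variational formula for the dimension spectrum of Lyapunov exponents of the Gauss transformation, due to Pollicott--Weiss \cite{PolWei99} (see also \cite[Theorem 1.3]{KesStr07}): for every $\alpha\in[\alpha_{\rm min},\infty)$,
$$b(\alpha)=\dim_H L(\alpha)=\frac1\alpha\,\sup\bigl\{h(\mu)\colon\mu\in\mathcal M_\phi(T),\ \chi(\mu)=\alpha\bigr\}.$$
Everything else is bookkeeping. Since $|DT|\ge1$ on $(0,1]$ we have $\phi=-\log|DT|\le0$, so $\chi(\mu)=\int|\phi|\,d\mu$ and $\mathcal M_\phi(T)$ is precisely the set of $T$-invariant Borel probability measures with $\chi(\mu)<\infty$; moreover $\alpha_{\rm min}\le\chi(\mu)<\infty$ and $0\le h(\mu)=F(\mu)+\chi(\mu)\le\chi(\mu)$ by \cite{Wal78}, so $h$ is finite on $\mathcal M_\phi(T)$. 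Multiplying the displayed identity by $\alpha$ gives $\alpha b(\alpha)=\sup\{h(\mu)\colon\chi(\mu)=\alpha\}$, hence $I(\alpha)=\alpha(1-b(\alpha))=\alpha-\sup\{h(\mu)\colon\chi(\mu)=\alpha\}$; and on the constraint set $\chi(\mu)=\alpha$ one has $\alpha-h(\mu)=\chi(\mu)-h(\mu)=-F(\mu)$, so
$$I(\alpha)=\inf\bigl\{-F(\mu)\colon\mu\in\mathcal M_\phi(T),\ \chi(\mu)=\alpha\bigr\}.$$
To make sense of this infimum I would note the constraint set is nonempty for every $\alpha\in[\alpha_{\rm min},\infty)$: the golden-mean fixed point of $T$ carries an invariant measure with $\chi=\alpha_{\rm min}$, measures supported on periodic orbits with all digits equal to $k$ have $\chi\to\infty$ as $k\to\infty$, and since $\mu\mapsto\chi(\mu)$ is linear on the convex set $\mathcal M_\phi(T)$ its range is the whole interval $[\alpha_{\rm min},\infty)$.

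For convexity I would argue directly from this formula. Fix $\alpha_1,\alpha_2\in[\alpha_{\rm min},\infty)$, $t\in(0,1)$, put $\alpha=t\alpha_1+(1-t)\alpha_2$, and given $\varepsilon>0$ pick $\mu_i\in\mathcal M_\phi(T)$ with $\chi(\mu_i)=\alpha_i$ and $-F(\mu_i)\le I(\alpha_i)+\varepsilon$. Then $\mu:=t\mu_1+(1-t)\mu_2\in\mathcal M_\phi(T)$ by convexity of $\mathcal M_\phi(T)$; $\chi(\mu)=\alpha$ because $\chi$ is linear in $\mu$; and, using that the Kolmogorov-Sina{\u\i} entropy map $\mu\mapsto h(\mu)$ is affine on the set of $T$-invariant probability measures \cite{Wal78}, $-F(\mu)=t(-F(\mu_1))+(1-t)(-F(\mu_2))\le tI(\alpha_1)+(1-t)I(\alpha_2)+\varepsilon$. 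Hence $I(\alpha)\le tI(\alpha_1)+(1-t)I(\alpha_2)+\varepsilon$, and letting $\varepsilon\to0$ gives convexity on $[\alpha_{\rm min},\infty)$. (Equivalently: the affine image $\{(\chi(\mu),-F(\mu))\colon\mu\in\mathcal M_\phi(T)\}\subset\mathbb R^2$ is convex and $I$ is its lower envelope; or: the entropy spectrum $\alpha\mapsto\sup\{h(\mu)\colon\chi(\mu)=\alpha\}$ is concave by the same convex-combination argument, so $I(\alpha)=\alpha-\sup\{h(\mu)\colon\chi(\mu)=\alpha\}$ is convex.)

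The main obstacle is really upstream of this proof: it is the validity of the variational formula for $b(\alpha)$ in the present non-compact, non-uniformly-expanding situation --- countably many full branches and unbounded $\log|DT|$ --- which is exactly the content of the multifractal analysis in \cite{PolWei99,KesStr07}; granting it, the lemma is an algebraic manipulation together with the affineness of entropy. The only point demanding a little extra care is the endpoint $\alpha=\alpha_{\rm min}$, which sits on the boundary of the spectrum, but there is no difficulty here: $L(\alpha_{\rm min})$ is countable, the golden-mean measure is the only element of $\mathcal M_\phi(T)$ with $\chi=\alpha_{\rm min}$ and it has zero entropy, so both sides of the claimed identity equal $\alpha_{\rm min}$ and the convexity step at that endpoint is unaffected.
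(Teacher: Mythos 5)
Your proof is correct and follows essentially the same route as the paper: both rest on the Pollicott--Weiss conditional variational formula $b(\alpha)=\sup\{h(\mu)/\chi(\mu)\colon\mu\in\mathcal M_\phi(T),\ \chi(\mu)=\alpha\}$ (your version is just the algebraic rearrangement $I(\alpha)=\alpha-\sup\{h(\mu)\colon\chi(\mu)=\alpha\}$), and both deduce convexity from the affinity of $\mu\mapsto h(\mu)$ and $\mu\mapsto\chi(\mu)$, which you merely spell out. Your peripheral remarks about the endpoint (e.g.\ the countability of $L(\alpha_{\rm min})$ and the value of $\chi$ at the golden-mean fixed point under the paper's normalization) are questionable, but they are not load-bearing for the lemma.
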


\begin{proof}
Denote the infimum by $\tilde I(\alpha)$.
 Choose a sequence $\{\nu_n\}$ in $\mathcal M_\phi(T)$
with $\chi(\nu_n)=\alpha$ and $\lim h(\nu_n)/\chi(\nu_n)= b(\alpha)$.
Then
$\tilde I(\alpha)\leq -\lim F(\nu_n)=
 I(\alpha).$
To show the reverse inequality,
choose a sequence $\{\mu_n\}$ in $\mathcal M_\phi(T)$ with
$\chi(\mu_n)=\alpha$ and $-F(\mu_n)\to \tilde I(\alpha)$ as $n\to\infty$.
Fix a measure $\nu\in\mathcal M_\phi(T)$ with
  $\chi(\nu)<\alpha$.
For each $n$ large enough fix $p_n\in(0,1]$ with $\chi(p_n\mu_n+(1-p_n)\nu)=\alpha.$
Then $\lim p_n\to1$ and hence $\lim h(p_n\mu_n+(1-p_n)\nu)=  \alpha-\tilde I(\alpha).$
 The variational formula in \cite{PolWei99} gives 
 $$b(\alpha)=\sup\left\{\frac{h(\mu)}{\chi(\mu)}\colon\mu\in\mathcal M_\phi(T),
 \ \chi(\mu)=\alpha\right\},$$
and therefore
$\alpha^{-1}(\alpha-\tilde I(\alpha))\leq b(\alpha)$, namely $I(\alpha)\leq \tilde I(\alpha)$ as required.
The convexity of $I$ is a consequence of the affinity of entropy and Lyapunov exponent on measures.
\end{proof}

\section{Upper bound with best exponential rate}
We are in position to prove the Main Theorem.

 \begin{lemma}\label{horse}
 Let $n\geq1$ be an integer and let $\alpha>0$.
Let $\mathscr{B}^n(\alpha)$ be a non-empty finite subset of $\{A\in\mathscr{A}^n\colon (2/n)\log q_n(A)\geq\alpha\}$.
There exists a measure $\mu\in\mathcal M_\phi(T)$ such that 
$$ 
 \lambda[\mathscr{B}^n(\alpha)]  \leq e^{16}e^{F(\mu)n}\ \ \text{ and }
 \ \  \chi(\mu)\geq\alpha-\frac{16}{n}.$$
 \end{lemma}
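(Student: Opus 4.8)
The plan is to realize $\mathscr{B}^n(\alpha)$ as a finite ``horseshoe'' for the first return-type map $T^n$ and to take for $\mu$ a Bernoulli-type measure carried by it. Every $A\in\mathscr{A}^n$ is a cylinder on which the digits $a_1,\dots,a_n$ are constant and $T^n|_A$ is a bijection onto $(0,1)$, so since $[\mathscr{B}^n(\alpha)]\subset(0,1)$, the set $\Lambda:=\bigcap_{j\ge0}T^{-jn}\big([\mathscr{B}^n(\alpha)]\big)$ is a $T^n$-invariant Cantor set on which $T^n$ is topologically conjugate to the full shift on $\#\mathscr{B}^n(\alpha)$ symbols; moreover $\Lambda$ is bounded away from $0$ (each $A$ is contained in some $(1/(k+1),1/k)$ with $k$ bounded), so $\phi=-\log|DT|$ is bounded on $\Lambda$. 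I would fix the probability vector $p_A:=\lambda(A)/\lambda[\mathscr{B}^n(\alpha)]$ for $A\in\mathscr{B}^n(\alpha)$, let $\nu$ be the corresponding Bernoulli measure on $\Lambda$, and set $\mu:=\frac1n\sum_{j=0}^{n-1}T^j_*\nu$. A one-line computation using $T^n_*\nu=\nu$ shows $\mu$ is $T$-invariant, and $\mu\in\mathcal M_\phi(T)$ because $\phi$ is bounded on the support of $\mu$.

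Next I would record two identities relating $\mu$ to $\nu$. The chain rule $\log|DT^n|=\sum_{j=0}^{n-1}\log|DT|\circ T^j$ together with $T$-invariance of $\mu$ gives $n\chi(\mu)=\int\log|DT^n|\,d\nu$. For entropy, affinity of the entropy map on $T^n$-invariant measures plus the observation that $T^j$ and $T^{n-j}$ are mutually factor maps between $(\,\cdot\,,\nu,T^n)$ and $(\,\cdot\,,T^j_*\nu,T^n)$ give $h(T^j_*\nu,T^n)=h(\nu,T^n)$ for each $j$, hence $n\,h(\mu)=h(\mu,T^n)=h(\nu,T^n)=-\sum_A p_A\log p_A$. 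Consequently $nF(\mu)=h(\nu,T^n)-\int\log|DT^n|\,d\nu$.

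Then I would use Lemmas \ref{length} and \ref{distortion} to control $\int\log|DT^n|\,d\nu$. Since $T^n|_A:A\to(0,1)$ is a diffeomorphism, $\int_A|DT^n|\,d\lambda=1$, so the average of $|DT^n|$ over $A$ equals $\lambda(A)^{-1}$, and bounded distortion yields $e^{-16}\lambda(A)^{-1}\le|DT^n(x)|\le e^{16}\lambda(A)^{-1}$ for $x\in A$; as $\nu(A)=p_A$, the term $\int_A\log|DT^n|\,d\nu$ lies between $p_A(-16-\log\lambda(A))$ and $p_A(16-\log\lambda(A))$, whence $\int\log|DT^n|\,d\nu\in\big[-16-\sum_A p_A\log\lambda(A),\,16-\sum_A p_A\log\lambda(A)\big]$. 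Feeding the upper estimate into $nF(\mu)$ and using $p_A=\lambda(A)/\lambda[\mathscr{B}^n(\alpha)]$ (so that $\sum_A p_A\log(\lambda(A)/p_A)=\log\lambda[\mathscr{B}^n(\alpha)]$) collapses the expression to $nF(\mu)\ge\log\lambda[\mathscr{B}^n(\alpha)]-16$, which is the first assertion. For the second, the lower estimate gives $n\chi(\mu)\ge-16-\sum_A p_A\log\lambda(A)$, and by Lemma \ref{length} together with the defining inequality $(2/n)\log q_n(A)\ge\alpha$ of the elements of $\mathscr{B}^n(\alpha)$ one has $-\log\lambda(A)>2\log q_n(A)\ge\alpha n$, whence $\chi(\mu)\ge\alpha-\frac{16}{n}$.

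The routine ingredients (the shift conjugacy, $T$-invariance of $\mu$, affinity and scaling of entropy, change of variables) are standard. The only point requiring care is the bookkeeping of the distortion constant $e^{16}$: choosing the weights proportional to $\lambda(A)$ — rather than to $q_n(A)^{-2}$ or to $|DT^n|^{-1}$ — is exactly what makes $e^{16}$ cancel in the first inequality and produces precisely the loss $\frac{16}{n}$ in the Lyapunov exponent, all uniformly in $n$. I do not expect a genuine obstacle: this is the ``easy half'' of the variational picture, namely a lower bound on the exponential rate produced by an explicit measure (indeed $\lambda[\mathscr{B}^n(\alpha)]$ is morally the partition sum of the finite subsystem), and the entire task is to keep the error terms uniform.
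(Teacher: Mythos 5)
Your proposal is correct, but it reaches the measure by a genuinely different route than the paper. The paper also passes to $\Lambda=\bigcap_{m\ge0}T^{-mn}[\mathscr{B}^n(\alpha)]$ and spreads a $T^n$-invariant measure via $\mu=\frac1n\sum_{j=0}^{n-1}\widehat\mu\circ T^{-j}$, but it produces $\widehat\mu$ abstractly: it applies Bowen's variational principle to $\widehat\phi=-\log|DT^n|$ on $\Lambda$, bounds the right-hand side from below by $\log\lambda[\mathscr{B}^n(\alpha)]-16$ by summing over preimages with the distortion estimate ($\inf_A e^{\widehat\phi}\ge e^{-16}\lambda(A)$), and then takes for $\widehat\mu$ a maximizer of $\widehat\nu\mapsto h(\widehat\nu)+\int\widehat\phi\,d\widehat\nu$, which exists by compactness of $\mathcal M(\widehat T|_\Lambda)$ and upper semicontinuity of entropy. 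You instead write the measure down explicitly as the Bernoulli measure with weights $p_A=\lambda(A)/\lambda[\mathscr{B}^n(\alpha)]$ and compute $h$ and $\chi$ directly; the identity $\sum_A p_A\log\bigl(\lambda(A)/p_A\bigr)=\log\lambda[\mathscr{B}^n(\alpha)]$ plays the role of the pressure lower bound, and the same distortion constant yields the same $e^{16}$ and $16/n$ losses, so the constants match the statement exactly. What each buys: your construction is more elementary and constructive (no variational principle, no semicontinuity argument), but it leans on the branches $T^n|_A$ being full (onto $(0,1)$, so that $\int_A|DT^n|\,d\lambda=1$) and on the coding of $\Lambda$ being essentially bijective so that $h(\nu,T^n)=-\sum_A p_A\log p_A$; the paper's argument needs neither and would go through for non-full branches. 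Two points you should state a bit more carefully, though neither is a gap: for $\mu\in\mathcal M_\phi(T)$ you need $\phi$ bounded on all of $\bigcup_{j=0}^{n-1}T^j\Lambda$, not just on $\Lambda$ (true, since only finitely many digit strings occur in $\mathscr{B}^n(\alpha)$, so every iterate $T^j\Lambda$ stays away from $0$), and the chain $h(\mu,T)=\frac1n h(\mu,T^n)=\frac1n h(\nu,T^n)$ uses Abramov's formula together with the affinity of entropy and your factor-map observation, all of which are standard but worth citing.
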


 \begin{proof}
  Put $\widehat T=T^n$ and 
 $\Lambda=\bigcap_{m=0}^\infty{\widehat T}^{-m}[\mathscr{B}^n(\alpha)].$
 Then $\Lambda$ is a compact set and $\widehat T|_\Lambda\colon \Lambda\to \Lambda$ is continuous.
 Put $\widehat\phi=-\log|D\widehat T|$ and
 fix $y_0\in \Lambda$. 
Lemma \ref{distortion} implies $ \sum_{i=0}^{m-1}(\widehat
 \phi(\widehat T^i(x))-\widehat
 \phi(\widehat T^i(y)))\leq 16$ for every $m\geq1$, every $x,y\in \Lambda$
 such that $\widehat T^i(x),\widehat T^i(y)$ belong to the same element of $\mathscr{B}^n(\alpha)$
 for each $i=0,\ldots,m-1$.
 The variational principle \cite[Lemma 1.20]{Bow75} gives
 $$\sup_{\widehat\nu\in\mathcal M(\widehat T|_\Lambda)}\left(h_{\widehat T|_\Lambda}(\widehat\nu)+\int\widehat\phi d\widehat\nu\right)=
 \lim_{m\to\infty}\frac{1}{m}\log\left(\sum_{x\in (\widehat T|_\Lambda)^{-m}(y_0)}
 \exp{\sum_{i=0}^{m-1}\widehat
 \phi(\widehat T^i(x))}\right),$$
 with $\mathcal M(\widehat T|_\Lambda)$ the space of $\widehat T|_\Lambda$-invariant Borel probability measures
 endowed with the weak*-topology
 and $h_{\widehat T|_\Lambda}(\widehat\nu)$ the entropy of $\widehat\nu\in \mathcal M(\widehat T|_\Lambda)$
 with respect to $\widehat T|_\Lambda$.
By Lemma \ref{distortion},  
$\inf_{[w]}e^{\widehat\phi}\geq e^{-16} \lambda[w]$
holds for every $w\in D^n$.
Hence \begin{align*}\sum_{x\in (\widehat T|_\Lambda)^{-m}(y_0)}\exp\left(\sum_{i=0}^{m-1}\widehat\phi(\widehat T^i(x))\right)
&\geq\left(\inf_{y'\in \Lambda}\sum_{x\in (\widehat T|_\Lambda)^{-1}(y')}e^{\widehat\phi(x)}\right)^m\\
&\geq\left(e^{-16}\lambda[\mathscr{B}^n(\alpha)]
\right)^m.\end{align*}
Taking logs of both sides, dividing by $m$ and plugging the result into the previous inequality gives
$$\lim_{m\to\infty}\frac{1}{m}\log\left(\sum_{x\in (\widehat T|_\Lambda)^{-m}(y_0)}
\exp\sum_{i=0}^{m-1}\widehat\phi(\widehat T^i(x))\right)
\geq\log\lambda[\mathscr{B}^n(\alpha)]-16.$$
Plugging this into the previous inequality yields
$$\sup_{\widehat\nu\in\mathcal M(\widehat T|_\Lambda)}\left(h_{\widehat T|_\Lambda}(\widehat\nu)+\int\widehat\phi d\widehat\nu\right)\geq
\log\lambda[\mathscr{B}^n(\alpha)]-16.$$
Since $\mathcal M(\widehat T|_\Lambda)$ is compact 
and $\mathcal M(\widehat T|_\Lambda)\ni\widehat\nu\mapsto h_{\widehat T|_\Lambda}(\widehat\nu)+\int\widehat\phi d\widehat\nu $
is upper semi-continuous, there exists a measure $\widehat\mu\in \mathcal M(\widehat T|_\Lambda)$
which attains this supremum. The measure
$\mu = (1/n)\sum_{i=0}^{n-1}\widehat\mu\circ T^{-i}$ is in $\mathcal M_\phi(T)$.
From the second inequality in Lemma \ref{length} and Lemma \ref{distortion}, 
$\inf_{[\mathscr{B}^n(\alpha,c)]}\log |D\widehat T|\geq\alpha n-16$ holds.
Hence
$\chi(\mu)=(1/n)\int\log|D\widehat T|d\widehat\mu\geq\alpha-16/n$ as required.
 \end{proof}


\begin{proof}[Proof of the Main Theorem]
Let $n\geq1$ be an integer.
We concentrate on the case $\alpha\in(2\gamma+\frac{16}{n},\infty)$ since the case 
$\alpha\in(\alpha_{\rm min},2\gamma-\frac{16}{n})$
is identical with the obvious modifications of statements.
Denote by $\lambda_n$ the distribution of $(2/n)\log q_n$.
For each $c>1$
choose a finite subset $\mathscr{B}^n(\alpha,c)$ of $\mathscr{A}^n$ such that
$\lambda_n([\alpha,\infty))\leq c\lambda[\mathscr{B}^n(\alpha,c)].$
By Lemma \ref{horse} there exists $\mu\in\mathcal M_\phi(T)$ which satisfies
$\lambda[\mathscr{B}^n(\alpha,c)]\leq e^{16}\exp(F(\mu)n)$ and
$\chi(\mu)\geq\alpha-16/n.$
Therefore
\begin{align*}
\lambda_n([\alpha,\infty))&\leq ce^{16}\exp(F(\mu)n)\\
&\leq ce^{16}\exp\left(\sup\left\{F(\mu)\colon\mu\in\mathcal M_\phi(T),\ \chi(\mu)\geq\alpha-\frac{16}{n}\right\}\right)\\
&\leq ce^{16}e^{-\inf_{\beta\in[\alpha-16/n,\infty)} I(\beta)}\\
&= ce^{16} e^{-I(\alpha-16/n)n}\\
&\leq ce^{16(I'(\alpha)+1)}e^{-I(\alpha)n}.
\end{align*}
For the last inequality we have used the convexity 
and the smoothness of $I$.
Since $c>1$ is arbitrary, we obtain 
$\lambda_n([\alpha,\infty))\leq C_\alpha e^{-I(\alpha)n}$ as required.
\end{proof}

\subsection*{Acknowledgments}
This research was partially supported by
the Grant-in-Aid for Young Scientists (A) of the JSPS 15H05435 and
 the Grant-in-Aid for Scientific Research (B) of the JSPS 16KT0021.

\end{document}